\newcommand{\doi}[1]{\href{http://dx.doi.org/#1}{doi:\texttt{#1}}}
\newcommand{\urlprefix}{}
\theoremstyle{plain}
\newtheorem{theorem}{Theorem}
\DeclareMathOperator{\MIS}{MIS}
\renewcommand{\baselinestretch}{1.2}
\begin{document}

\title[On the number of maximal independent sets in a graph]{On the
  number of maximal\\ independent sets in a graph}

\author{David R. Wood}


\thanks{Department of Mathematics and Statistics, The University of
  Melbourne, Melbourne, Australia
  (\texttt{woodd@unimelb.edu.au}). Supported by a QEII Fellowship from
  the Australian Research Council.}

\date{\today}

\begin{abstract} 
  Miller and Muller (1960) and independently Moon and Moser (1965)
  determined the maximum number of maximal independent sets in an
  $n$-vertex graph.  We give a new and simple proof of this result.
\end{abstract}

\maketitle

Let $G$ be a (simple, undirected, finite) graph. A set $S\subseteq
V(G)$ is \emph{independent} if no edge of $G$ has both its endpoints
in $S$. An independent set $S$ is \emph{maximal} if no independent set
of $G$ properly contains $S$. Let $\MIS(G)$ be the set of all maximal
independent sets in $G$. \citet{MM60} and \citet{MoonMoser}
independently proved that the maximum, taken over all $n$-vertex
graphs $G$, of $|\MIS(G)|$ equals
$$g(n):=\begin{cases}
  3^{n/3}& \text{ if }n\equiv0\pmod{3}\\
  4\cdot3^{(n-4)/3}& \text{ if }n\equiv1\pmod{3}\\
  2\cdot3^{(n-2)/3}& \text{ if }n\equiv2\pmod{3}\enspace.
\end{cases}$$ This result is important because $g(n)$ bounds the time
complexity of various algorithms that output all maximal independent
sets
\citep{BronKerbosch,LLR-SJC80,TIAS-SJC77,TTT-TCS06,JYP-IPL88,Eppstein-JGAA03,ELS}.
Here we give a new and simple proof of this upper bound on
$|\MIS(G)|$.

\begin{theorem}[\citep{MM60,MoonMoser}] 
  For every $n$-vertex graph $G$, $$|\MIS(G)|\leq g(n)\enspace.$$
\end{theorem}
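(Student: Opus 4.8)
The plan is to prove the bound by induction on $n$, splitting $\MIS(G)$ into smaller instances using a single well-chosen vertex. The only genuine base case is $n=0$, where the empty graph has the single maximal independent set $\emptyset$ and $g(0)=1$; the remaining small cases are trivial to check directly. For the inductive step I would fix a vertex $v$ of \emph{minimum} degree $d$ in $G$ (the reason for choosing minimum degree appears below). Write $N(v)$ for the set of neighbours of $v$, set $N[v]=N(v)\cup\{v\}$, and for any vertex $u$ let $G-N[u]$ denote the graph obtained by deleting $u$ together with all its neighbours.

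The key reduction is the inequality
$$|\MIS(G)|\le\sum_{u\in N[v]}|\MIS(G-N[u])|.$$
To prove it, first note that every $S\in\MIS(G)$ meets $N[v]$: otherwise $v\notin S$ and $S$ contains no neighbour of $v$, so $S\cup\{v\}$ is independent, contradicting maximality. Thus the sets $\{S\in\MIS(G):u\in S\}$, over $u\in N[v]$, cover $\MIS(G)$, and the displayed inequality follows once I show $|\{S\in\MIS(G):u\in S\}|=|\MIS(G-N[u])|$ for each $u$. This equality I would prove by checking that $S\mapsto S\setminus\{u\}$ is a bijection: if $u\in S$ then no neighbour of $u$ lies in $S$, so $S\setminus\{u\}$ is a maximal independent set of $G-N[u]$; conversely any maximal independent set $T$ of $G-N[u]$ yields the maximal independent set $T\cup\{u\}$ of $G$, since $T$ dominates $G-N[u]$ and $u$ dominates $N[u]$.

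Now I would apply the induction hypothesis. Each graph $G-N[u]$ has $n-1-\deg(u)$ vertices, so $|\MIS(G-N[u])|\le g(n-1-\deg(u))$. Because $v$ has minimum degree, $\deg(u)\ge d$ for every $u\in N[v]$, and since $g$ is nondecreasing this gives $|\MIS(G-N[u])|\le g(n-1-d)$; as $|N[v]|=d+1$, the reduction yields $|\MIS(G)|\le (d+1)\,g(n-1-d)$. This is exactly where minimum degree is used: it lets me replace every summand by the single largest one. It therefore remains to establish the monotonicity of $g$ and the arithmetic inequality
$$(d+1)\,g(n-1-d)\le g(n)\qquad(0\le d\le n-1).$$

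I expect this arithmetic inequality to be the main obstacle, and it is precisely where the three residue classes in the definition of $g$ ---and the tightness of the theorem--- originate. The cleanest way I know to see it is that, for $n\ge 2$, $g(n)$ equals the maximum of $\prod_i a_i$ over all partitions $n=\sum_i a_i$ into integer parts $a_i\ge 2$ (the classical maximum-product partition, attained by using as many $3$'s as possible, with a $4$ or a $2$ absorbing the remainder). Granting this, an optimal partition of $n-1-d$ together with one extra part of size $d+1$ is a partition of $n$ whose product $(d+1)\,g(n-1-d)$ is at most $g(n)$; the degenerate case $d=0$ is handled instead by $|\MIS(G)|=|\MIS(G-v)|\le g(n-1)\le g(n)$, and small values of $n-1-d$ are checked directly. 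Carrying this out rigorously reduces to a short but unavoidable case analysis on $n\bmod 3$ and $d\bmod 3$, verifying $(d+1)\le g(n)/g(n-1-d)$; the binding cases turn out to be $d\in\{1,2,3\}$, which give equality and correspond to the extremal graphs (disjoint unions of copies of $K_3$, with a $K_2$ or $K_4$ absorbing the remainder).
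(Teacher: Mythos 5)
Your proposal follows the paper's proof almost exactly in its graph-theoretic core: induction on $n$, a vertex $v$ of minimum degree $d$, the observation that every maximal independent set meets $N[v]$, the correspondence (which you correctly note is in fact a bijection, though only the injective direction is needed) between maximal independent sets containing $u$ and $\MIS(G-N[u])$, and the resulting bound $|\MIS(G)|\le (d+1)\,g(n-1-d)$. The only genuine divergence is the closing arithmetic. The paper finishes by direct computation: it notes $4\cdot 3^{(n-4)/3}\le g(n)\le 3^{n/3}$ and splits into $d\ge 3$ (where $(d+1)\,3^{(n-d-1)/3}\le 4\cdot 3^{(n-4)/3}$), $d=2$ (where $3\,g(n-3)=g(n)$ exactly), and $d=1$ (three residue classes). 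You instead invoke the characterization of $g(n)$ as the maximum product of a partition of $n$ into parts at least $2$, so that adjoining the part $d+1$ to an optimal partition of $n-1-d$ gives $(d+1)\,g(n-1-d)\le g(n)$ in one stroke; this is precisely the connection to maximum-product partitions that the paper attributes to Vatter, and it is conceptually cleaner --- it explains both the formula for $g$ and the extremal examples --- at the cost of having to prove the classical partition fact (a short argument: no parts equal to $1$, no parts $\ge 5$, at most two $2$'s). One caution: your fallback plan of ``a case analysis on $n\bmod 3$ and $d\bmod 3$'' is not by itself finite; for $d\ge 3$ you need a statement such as the paper's implicit $(d+1)\le 4\cdot 3^{(d-3)/3}$ to dispose of all large $d$ uniformly, so if you bypass the partition characterization you must include that step. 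A minor point in your favour: you treat $d=0$ explicitly, a case the paper's case analysis silently omits (it is trivially $1\cdot g(n-1)\le g(n)$).
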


\begin{proof}
  We proceed by induction on $n$.  The base case with $n\leq 2$ is
  easily verified.  Now assume that $n\geq3$.  Let $G$ be a graph with
  $n$ vertices.  Let $d$ be the minimum degree of $G$.  Let $v$ be a
  vertex of degree $d$ in $G$.  Let $N[v]$ be the closed neighbourhood
  of $v$.  If $I\in\MIS(G)$ then $I\cap N[v]\neq\emptyset$, otherwise
  $I\cup\{v\}$ would be an independent set. Moreover, if $w\in I\cap
  N[v]$ then $I\setminus\{w\}\in\MIS(G-N[w])$. Thus
$$|\MIS(G)|\leq\sum_{w\in N_G[v]} |\MIS(G-N_G[w])|\enspace,$$
Since $\deg(w)\geq d$ and $g$ is non-decreasing, by induction,
\begin{equation*}
  \label{Ind}
  |\MIS(G)|\leq(d+1)\cdot g(n-d-1)\enspace.
\end{equation*}
Note that
$$  4\cdot3^{(n-4)/3} \leq g(n) \leq  3^{n/3}\enspace.$$
If $d\geq 3$ then
$$|\MIS(G)|\leq(d+1)\cdot 3^{(n-d-1)/3}\leq 4\cdot 3^{(n-4)/3}\leq
g(n)\enspace.$$ If $d=2$ then
$$|\MIS(G)|\leq 3\cdot g(n-3)=g(n)\enspace.$$
If $d=1$ and $n\equiv1\pmod{3}$ then since $n-2\equiv2\pmod{3}$,
$$\MIS(G)\leq 2\cdot g(n-2)\leq 2\cdot2\cdot 3^{(n-2-2)/3}=
4\cdot3^{(n-4)/3}=g(n)\enspace.$$
If $d=1$ and $n\equiv0\pmod{3}$ then since $n-2\equiv1\pmod{3}$,
$$\MIS(G)\leq 2\cdot g(n-2)\leq 2\cdot4\cdot 3^{(n-2-4)/3}
<3^{n/3}=g(n)\enspace.$$ If $d=1$ and $n\equiv2\pmod{3}$ then since
$n-2\equiv0\pmod{3}$,
$$\MIS(G)\leq 2\cdot g(n-2)\leq 2\cdot3^{(n-2)/3}=g(n)\enspace.$$
This proves that $|\MIS(G)|\leq g(n)$, as desired.
\end{proof}



For completeness we describe the example by \citet{MM60} and
\citet{MoonMoser} that proves that Theorem~1 is best possible.  If
$n\equiv0\pmod{3}$ then let $M_n$ be the disjoint union of
$\frac{n}{3}$ copies of $K_3$.  If $n\equiv1\pmod{3}$ then let $M_n$
be the disjoint union of $K_4$ and $\frac{n-4}{3}$ copies of $K_3$.
If $n\equiv2\pmod{3}$ then let $M_n$ be the disjoint union of $K_2$
and $\frac{n-2}{3}$ copies of $K_3$.  Observe that $|\MIS(M_n)|=g(n)$.

Note that \citet{Vatter} independently proved Theorem 1, and gave a
connection between this result and the question, ``What is the largest
integer that is the product of positive integers with sum $n$?''
Also note that Dieter Kratsch proved that $|\MIS(G)|\leq 3^{n/3}$
using a similar proof to that presented here; see \citet[page
177]{Gaspers}. Thanks to the authors of \citep{Vatter,Gaspers} for
pointing out these references.


\begin{thebibliography}{11}
\providecommand{\natexlab}[1]{#1}
\providecommand{\url}[1]{\texttt{#1}}
\providecommand{\urlprefix}{}
\expandafter\ifx\csname urlstyle\endcsname\relax
  \providecommand{\doi}[1]{doi:\discretionary{}{}{}#1}\else
  \providecommand{\doi}{doi:\discretionary{}{}{}\begingroup
  \urlstyle{rm}\Url}\fi

\bibitem[{Bron and Kerbosch(1973)}]{BronKerbosch}
\textsc{Coen Bron and Joep Kerbosch}.
\newblock Finding all cliques of an undirected graph.
\newblock \emph{Comm. ACM}, 16(9):575--577, 1973.
\newblock \doi{10.1145/362342.362367}.

\bibitem[{Eppstein(2003)}]{Eppstein-JGAA03}
\textsc{David Eppstein}.
\newblock Small maximal independent sets and faster exact graph coloring.
\newblock \emph{J. Graph Algorithms Appl.}, 7(2):131--140, 2003.
\newblock \urlprefix\url{http://jgaa.info/accepted/2003/Eppstein2003.7.2.pdf}.

\bibitem[{Eppstein et~al.(2010)Eppstein, L\"offler, and Strash}]{ELS}
\textsc{David Eppstein, Maarten L\"offler, and Darren Strash}.
\newblock Listing all maximal cliques in sparse graphs in near-optimal time.
\newblock 2010.
\newblock \urlprefix\url{http://arxiv.org/abs/1006.5440}.

\bibitem[{Gaspers(2010)}]{Gaspers}
\textsc{Serge Gaspers}.
\newblock \emph{Exponential Time Algorithms: Structures, Measures, and Bounds}.
\newblock VDM Verlag Dr. M{\"u}ller, 2010.
\newblock
  \urlprefix\url{http://www.kr.tuwien.ac.at/drm/gaspers/SergeBookETA2010_scree%
n.pdf}.

\bibitem[{Johnson et~al.(1988)Johnson, Yannakakis, and
  Papadimitriou}]{JYP-IPL88}
\textsc{David~S. Johnson, Mihalis Yannakakis, and Christos~H. Papadimitriou}.
\newblock On generating all maximal independent sets.
\newblock \emph{Inform. Process. Lett.}, 27(3):119--123, 1988.
\newblock \doi{10.1016/0020-0190(88)90065-8}.

\bibitem[{Lawler et~al.(1980)Lawler, Lenstra, and Kan}]{LLR-SJC80}
\textsc{Eugene~L. Lawler, Jan~Karel Lenstra, and Alexander H. G.~Rinnooy Kan}.
\newblock Generating all maximal independent sets: {NP}-hardness and
  polynomial-time algorithms.
\newblock \emph{SIAM J. Comput.}, 9(3):558--565, 1980.
\newblock \doi{10.1137/0209042}.

\bibitem[{Miller and Muller(1960)}]{MM60}
\textsc{R.~E. Miller and D.~E. Muller}.
\newblock A problem of maximum consistent subsets, 1960.
\newblock IBM Research Report RC-240, J. T. Watson Research Center, New York,
  USA.

\bibitem[{Moon and Moser(1965)}]{MoonMoser}
\textsc{John~W. Moon and Leo Moser}.
\newblock On cliques in graphs.
\newblock \emph{Israel J. Math.}, 3:23--28, 1965.
\newblock \doi{10.1007/BF02760024}.

\bibitem[{Tomita et~al.(2006)Tomita, Tanaka, and Takahashi}]{TTT-TCS06}
\textsc{Etsuji Tomita, Akira Tanaka, and Haruhisa Takahashi}.
\newblock The worst-case time complexity for generating all maximal cliques and
  computational experiments.
\newblock \emph{Theoret. Comput. Sci.}, 363(1):28--42, 2006.
\newblock \doi{10.1016/j.tcs.2006.06.015}.

\bibitem[{Tsukiyama et~al.(1977)Tsukiyama, Ide, Ariyoshi, and
  Shirakawa}]{TIAS-SJC77}
\textsc{Shuji Tsukiyama, Mikio Ide, Hiromu Ariyoshi, and Isao Shirakawa}.
\newblock A new algorithm for generating all the maximal independent sets.
\newblock \emph{SIAM J. Comput.}, 6(3):505--517, 1977.
\newblock \doi{10.1137/0206036}.

\bibitem[{Vatter(2011)}]{Vatter}
\textsc{Vincent Vatter}.
\newblock Maximal independent sets and separating covers.
\newblock \emph{American Mathematical Monthly}, 118:418--423, 2011.
\newblock
  \urlprefix\url{http://www.math.ufl.edu/~vatter/publications/sss-mis/}.

\end{thebibliography}

\def\cprime{$'$} \def\soft#1{\leavevmode\setbox0=\hbox{h}\dimen7=\ht0\advance
  \dimen7 by-1ex\relax\if t#1\relax\rlap{\raise.6\dimen7
  \hbox{\kern.3ex\char'47}}#1\relax\else\if T#1\relax
  \rlap{\raise.5\dimen7\hbox{\kern1.3ex\char'47}}#1\relax \else\if
  d#1\relax\rlap{\raise.5\dimen7\hbox{\kern.9ex \char'47}}#1\relax\else\if
  D#1\relax\rlap{\raise.5\dimen7 \hbox{\kern1.4ex\char'47}}#1\relax\else\if
  l#1\relax \rlap{\raise.5\dimen7\hbox{\kern.4ex\char'47}}#1\relax \else\if
  L#1\relax\rlap{\raise.5\dimen7\hbox{\kern.7ex
  \char'47}}#1\relax\else\message{accent \string\soft \space #1 not
  defined!}#1\relax\fi\fi\fi\fi\fi\fi}

\end{document}